\newtheorem{theorem}{Theorem}[section]
\newtheorem*{theorem*}{Theorem}
\newtheorem{lemma}{Lemma}[section]
\newtheorem{corollary}{Corollary}[section]
\theoremstyle{remark}
\newtheorem{remark}{Remark}[section]
\newcommand{\CC}{\mathbb{C}}
\newcommand{\EE}{\mathbb{E}}
\newcommand{\PP}{\mathbb{P}}
\newcommand{\mb}{\mathbb}
\newcommand{\mc}{\mathcal}
\newcommand{\leqs}{\leqslant }
\newcommand{\geqs}{\geqslant }
\begin{document}
\title{Sign changes of the partial sums of a random multiplicative function}
\author{Marco Aymone}
\address{ Marco Aymone, Departamento de Matem\'atica, Universidade Federal de Minas Gerais (UFMG), Brazil.}
\email{marco@mat.ufmg.br} 
\author{Winston Heap}
\address{Winston Heap}
\email{winstonheap@gmail.com}
\author{Jing Zhao}
\address{Jing Zhao}
\email{jingzh95@gmail.com}
\begin{abstract}
We provide a simple proof that the partial sums $\sum_{n\leq x}f(n)$ of a Rademacher random  multiplicative function $f$ change sign infinitely often as $x\to\infty$, almost surely.
\end{abstract}
\maketitle

\section{Introduction}

The behaviour of partial sums of multiplicative functions has a long history exhibiting deep connections with arithmetic. The partial sums of the M\"obius function for instance, $M(x)=\sum_{n\leqs x}\mu(n)$, are intimately related to analytic properties of the Riemann zeta function. Indeed, it is well known that the bound $M(x)\ll x^{1/2+\epsilon}$ for all $\epsilon>0$ is equivalent to the Riemann Hypothesis. By standard methods one easily obtains the complementary bound $M(x)=\Omega(\sqrt{x})$ unconditionally. The current gap between these two bounds stands at a factor much smaller than $x^\epsilon$ due to Soundararajan \cite{soundmobius} who showed that $M(x)\ll x^{1/2}\exp((\log x)^{1/2}(\log\log x)^{14})$. The true size of these fluctuations may be yet smaller. Gonek (unpublished) and Ng \cite{Ngmobius} have conjectured that the correct order should be around $x^{1/2}(\log\log\log x)^{5/4}$, although this seems rather deep and out of reach at the moment. A related, but more tractable, problem in this area is that of sign changes of $M(x)$. By relatively simple arguments involving the Dirichlet series of $1/\zeta(s)$, one can show that $M(x)$ changes sign infinitely often as $x\to\infty$\footnote{We say that any function $M(x)$ changes its sign infinitely often as $x\to\infty$ if neither of the two following inequalities holds for all sufficiently large $x$: $M(x)\geq 0$ or $M(x)\leq 0$.}. 


In 1944, Wintner \cite{wintner} introduced a random model for $M(x)$. Let $(f(p))_{p \text{ prime}}$ be a set of \textit{i.i.d.} random variables taking values $\pm1$ with probability $1/2$ (Rademacher variables) and extend this by multiplicativity to $\mb{N}$ with support on the squarefree integers. So for example $f(6)=f(2)f(3)$ and $f(4)=0$. Wintner showed that the bound
\[
\mathcal M (x) := \sum_{n\leqs x}f(n)\ll x^{1/2+\epsilon}
\]     
holds, for all $\epsilon>0$, almost surely. This bound has since seen several improvements, the current best being due independently to  Basquin \cite{basquinn} and Lau--Tenenbaum--Wu \cite{tenenbaum2013} who showed that 
\[
\mathcal M (x)\ll x^{1/2}(\log\log x)^{2+\epsilon}
\]  
almost surely. A very recent result due to Harper \cite{harperomegabounds} shows there almost surely exist arbitrarily large values of $x$ for which 
\[
|\mc{M}(x)|\geqs\sqrt{x}(\log\log x)^{1/4+o(1)}.
\]
Harper has indicated\footnote{Private communication.} that his methods may be modified to produce both large positive and negative values of $\mc{M}(x)$ for arbitrarily large $x$, thus proving that $\mc{M}(x)$ has an infinite number of sign changes. In this note we aim to give a simple proof of this fact akin to the proof in the deterministic case.

\begin{theorem}\label{teorema 1} Let $f$ be a Rademacher random multiplicative function and $\mathcal{M}(x)$ its partial sums up to $x$. Then $\mathcal{M}(x)$ changes its sign infinitely often as $x\to\infty$, almost surely.
\end{theorem}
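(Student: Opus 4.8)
The deterministic argument sketched above rests on applying Landau's theorem to $1/\zeta(s)$, whose only real singularity lies off the positive axis; I expect the direct analogue to fail here, because the random Euler product $F(s)=\sum_n f(n)n^{-s}=\prod_p(1+f(p)p^{-s})$ is almost surely analytic and zero-free only in $\{\Re s>1/2\}$ and has the line $\Re s=1/2$ as a natural boundary. In particular the real point $s=1/2$ is itself singular, so feeding the eventual nonnegativity of $\mathcal{M}$ into Landau's theorem for $\int_1^\infty \mathcal{M}(x)x^{-s-1}\,dx=F(s)/s$ produces no contradiction (indeed one checks $F(s)\to0$ as $s\to 1/2^+$ along the reals). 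I would therefore argue probabilistically: first show that each sign occurs with probability bounded below at every large scale, then upgrade to an almost-sure infinitely-often statement.

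For the first step I would use the moments $\EE\,\mathcal{M}(x)=1$, $\EE\,\mathcal{M}(x)^2\sim \tfrac{6}{\pi^2}x$ (the count of squarefree $n\le x$) and the standard fourth-moment bound $\EE\,\mathcal{M}(x)^4\ll x^2$. Paley--Zygmund applied to $\mathcal{M}(x)^2$ gives $\PP(|\mathcal{M}(x)|\ge c\sqrt{x})\ge\delta$ for absolute constants $c,\delta>0$, hence $\EE|\mathcal{M}(x)|\gg\sqrt{x}$. Since $\EE\,\mathcal{M}(x)^-=\tfrac12(\EE|\mathcal{M}(x)|-1)\gg\sqrt{x}$, the Cauchy--Schwarz inequality $\EE\,\mathcal{M}(x)^-\le(\EE\,\mathcal{M}(x)^2)^{1/2}\PP(\mathcal{M}(x)<0)^{1/2}$ yields $\PP(\mathcal{M}(x)<0)\ge(\EE\,\mathcal{M}(x)^-)^2/\EE\,\mathcal{M}(x)^2\ge\delta'$, and symmetrically $\PP(\mathcal{M}(x)>0)\ge\delta'$.

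To pass to ``infinitely often, almost surely'' I would isolate a fresh fluctuation of the full order $\sqrt{x}$. Writing $\mathcal{G}=\sigma(f(p):p\le\sqrt{x})$ and splitting $n\le x$ according to whether its largest prime factor exceeds $\sqrt{x}$, one gets $\mathcal{M}(x)=\mathcal{M}_{\le\sqrt{x}}(x)+\sum_{\sqrt{x}<p\le x}f(p)\,\mathcal{M}(x/p)$, where the smooth part $\mathcal{M}_{\le\sqrt{x}}(x)$ and each weight $\mathcal{M}(x/p)$ (note $x/p<\sqrt{x}$) are $\mathcal{G}$-measurable, while the signs $f(p)$, $p>\sqrt{x}$, are independent of $\mathcal{G}$. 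Conditionally on $\mathcal{G}$ the second sum is thus a sum of independent mean-zero terms of conditional variance $\sum_{\sqrt{x}<p\le x}\mathcal{M}(x/p)^2$, which is of order $x$, i.e.\ comparable to $(\mathcal{M}_{\le\sqrt{x}}(x))^2$. Choosing scales with $x_{k+1}>x_k^2$, so that the prime blocks $(\sqrt{x_k},x_k]$ are disjoint and the injected signs are independent across $k$, I would then apply the conditional (Lévy) form of the Borel--Cantelli lemma along the filtration $\FF_k=\sigma(f(p):p\le x_k)$ to the events $\{\mathcal{M}(x_k)<0\}$ and, symmetrically, $\{\mathcal{M}(x_k)>0\}$, concluding that $\mathcal{M}$ is negative for infinitely many $x_k$ and positive for infinitely many $x_k$ almost surely.

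The main obstacle is this final step. It requires a \emph{conditional} anti-concentration estimate---that, given $\mathcal{G}$, the fresh sum $\sum_{\sqrt{x}<p\le x}f(p)\mathcal{M}(x/p)$ falls below $-\mathcal{M}_{\le\sqrt{x}}(x)$ with probability bounded below---which forces one to control the conditional variance $\sum_p\mathcal{M}(x/p)^2$ from below and the smooth part $\mathcal{M}_{\le\sqrt{x}}(x)$ from above simultaneously, and to check that $\sum_k\PP(\mathcal{M}(x_k)<0\mid\FF_{k-1})$ diverges almost surely. Establishing these almost-sure comparisons between the smooth and rough parts at infinitely many scales is where I expect the real work to lie; the moment and Borel--Cantelli inputs are otherwise routine.
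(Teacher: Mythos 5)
Your approach is genuinely different from the paper's, and it contains concrete errors already at the first step. The fourth moment bound $\EE\,\mathcal{M}(x)^4\ll x^2$ is false: $\EE\,\mathcal{M}(x)^4$ counts quadruples of squarefree integers $a,b,c,d\le x$ with $abcd$ a perfect square, and this count is of order $x^2\log x$ (the off-diagonal solutions of $ab=cd$ already contribute this much). Paley--Zygmund therefore yields only $\PP(|\mathcal{M}(x)|\ge c\sqrt{x})\gg 1/\log x$, and the conclusion $\EE|\mathcal{M}(x)|\gg\sqrt{x}$ you draw from it is in fact \emph{false}: by Harper's better-than-square-root-cancellation theorem, $\EE|\mathcal{M}(x)|\asymp \sqrt{x}/(\log\log x)^{1/4}$. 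So the scale-by-scale bound $\PP(\mathcal{M}(x)<0)\ge\delta'$ is not established; what survives of your argument is a bound decaying like a power of $1/\log x$, which is summable along the sparse scales $x_{k+1}>x_k^2$ that you need for independence, so even the divergence hypothesis of your conditional Borel--Cantelli application is in doubt. The final step, which you candidly flag as the hard part, really is the hard part: simultaneously lower-bounding the conditional variance $\sum_{\sqrt{x}<p\le x}\mathcal{M}(x/p)^2$ and upper-bounding the smooth part at infinitely many scales is essentially the content of Harper's paper on almost sure large fluctuations, and is far from routine. As it stands the proposal is a program, not a proof.

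The paper avoids all of this by salvaging precisely the Dirichlet series route you dismiss. You are right that $s=1/2$ is singular and that $F(\sigma)\to0$ as $\sigma\to1/2^+$ (the paper proves $F(\sigma_k)=(2\sigma_k-1)^{1/2+o(1)}$ along a sequence, via Hoeffding's inequality applied to $\sum_p f(p)p^{-\sigma}$ together with $\log F(\sigma)=\sum_p f(p)p^{-\sigma}-\tfrac12\log\zeta(2\sigma)+O(1)$, the deterministic term dominating). But instead of abandoning Landau-type reasoning it exploits the line $t\ne0$: a central limit theorem for $\sum_p f(p)\cos(t\log p)p^{-\sigma}$ combined with the Kolmogorov $0$--$1$ law gives $\limsup_{\sigma\to1/2^+}|F(\sigma+it)|=\infty$ almost surely, which forces $\int_1^\infty|\mathcal{M}(u)|u^{-3/2}\,du=\infty$; on the other hand, an eventual sign of $\mathcal{M}$ would make this integral comparable to $\int_1^\infty\mathcal{M}(u)u^{-3/2}\,du$, which stays bounded because $F(\sigma_k)\to0$ on the real axis. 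The tension between these two facts is the entire proof, and it requires none of the conditional anti-concentration machinery your route depends on.
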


A classical tool in analytic number theory to analyse questions of sign changes is Landau's oscillation Theorem, which requires that the Dirichlet series $F(s):=\sum_{n=1}^\infty \frac{f(n)}{n^s}$ is analytic in some open set containing its abscissa of convergence. The problem here is that for a Rademacher random multilpicative function $f$, the Dirichlet series $F(s)$ is quite irregular near its abcissa of convergence, which is equal to $1/2$. 
 Our proof of Theorem \ref{theorem infinite sign changes} leverages two facts: that almost surely, $F(\sigma)\to 0$ as $\sigma\to 1/2^+$, and for any real $t\neq 0$,  $\limsup_{\sigma\to1/2^+}|F(\sigma+it)|=\infty$. After partial summation this roughly translates to the almost sure limits 
 \[
 \int_1^\infty \frac{\mc M(x)}{x^{1+\sigma}}dx\to 0,\qquad \int_1^\infty \frac{|\mc M(x)|}{x^{1+\sigma}}dx\to \infty
 \]
as $\sigma\to1/2^+$ which visibly captures the sign changes.

This method allow us to deduce a result relatively stronger than Theorem \ref{teorema 1}. 

\begin{theorem}\label{theorem infinite sign changes}
Let $f$ be a Rademacher random multiplicative function and $\lambda:[1,\infty)\to\mathbb{R}$ be any function  such that $\int_{1}^\infty \frac{|\lambda(u)|}{u^{3/2}}du<\infty$. Then, $\mc{M}(x)+\lambda(x)$ changes sign infinitely often as $x\to\infty$, almost surely. 
\end{theorem}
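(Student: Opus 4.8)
The plan is to argue by contradiction through the Mellin transform of $\mc{M}$, exploiting the two boundary facts about $F(s)$ recorded in the introduction. Write $g(x):=\mc{M}(x)+\lambda(x)$ and suppose, towards a contradiction, that $g$ is eventually of one sign; by symmetry I may assume there is $x_0\geqs 1$ with $g(x)\geqs 0$ for all $x\geqs x_0$. The starting identity, obtained by termwise integration in the region of convergence, is
\[
\int_1^\infty \frac{\mc{M}(x)}{x^{1+s}}\,dx=\frac{F(s)}{s},\qquad \Re(s)>1/2 .
\]
Combining this with the almost sure fact $F(\sigma)\to 0$ as $\sigma\to 1/2^+$, and with the elementary bound
\[
\Big|\int_1^\infty \frac{\lambda(x)}{x^{1+s}}\,dx\Big|\leqs \int_1^\infty\frac{|\lambda(x)|}{x^{3/2}}\,dx<\infty \qquad(\Re(s)\geqs 1/2),
\]
which is precisely where the hypothesis on $\lambda$ enters, I see that the real transform $\int_1^\infty g(x)\,x^{-1-\sigma}\,dx$ has a finite limit as $\sigma\to1/2^+$, and in particular stays bounded.

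Next I would examine the transform along a vertical line at a fixed height $t\neq 0$. Since $|\sigma+it|$ is bounded away from $0$ and $\infty$ near $\sigma=1/2$, the identity above together with the second boundary fact $\limsup_{\sigma\to1/2^+}|F(\sigma+it)|=\infty$ yields
\[
\limsup_{\sigma\to1/2^+}\Big|\int_1^\infty \frac{\mc{M}(x)}{x^{1+\sigma+it}}\,dx\Big|=\infty ,
\]
while the $\lambda$-integral remains bounded by the second display with $s=\sigma+it$. Hence
\[
\limsup_{\sigma\to1/2^+}\Big|\int_1^\infty \frac{g(x)}{x^{1+\sigma+it}}\,dx\Big|=\infty .
\]

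The contradiction comes from the non-negativity of $g$ on $[x_0,\infty)$, which kills the oscillation of $x^{-it}$. Splitting the integral at $x_0$, the tail obeys
\[
\Big|\int_{x_0}^\infty \frac{g(x)}{x^{1+\sigma+it}}\,dx\Big|\leqs \int_{x_0}^\infty \frac{g(x)}{x^{1+\sigma}}\,dx ,
\]
and the right-hand side is bounded as $\sigma\to1/2^+$, since it differs from the convergent real transform $\int_1^\infty g(x)\,x^{-1-\sigma}\,dx$ only by the integral over the compact interval $[1,x_0]$, on which $\mc{M}$ is bounded and $\int_1^{x_0}|\lambda(x)|\,x^{-1-\sigma}\,dx\leqs\int_1^\infty|\lambda(x)|\,x^{-3/2}\,dx<\infty$. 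The complementary piece $\int_1^{x_0}g(x)\,x^{-1-\sigma-it}\,dx$ is likewise bounded. Therefore $\big|\int_1^\infty g(x)\,x^{-1-\sigma-it}\,dx\big|$ stays bounded as $\sigma\to1/2^+$, contradicting the previous display and completing the argument.

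The genuinely substantial input is the second boundary fact, $\limsup_{\sigma\to1/2^+}|F(\sigma+it)|=\infty$ for every real $t\neq0$: this is the analytic heart of the matter, where the probabilistic irregularity of $F$ near its abscissa of convergence must be harnessed, and I would expect it to be the main obstacle (to be isolated as a separate lemma). Granting the two facts, the deduction above is essentially elementary; the only delicate bookkeeping concerns $\lambda$, and here the exponent $3/2$ in the hypothesis is exactly matched to the abscissa $1/2$ so that the $\lambda$-transform is uniformly controlled up to the critical line, both on and off the real axis.
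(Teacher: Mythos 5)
Your deduction from the two boundary facts is correct and is essentially the paper's own argument, lightly repackaged. The paper first proves, as a separate lemma, that $\int_1^\infty|\mc M(u)+\lambda(u)|u^{-3/2}\,du=\infty$ almost surely --- by exactly your observation that absolute convergence of the complex transform would force $F(\sigma+it)$ to have a finite limit as $\sigma\to1/2^+$ --- and then uses the one-sign hypothesis to compare this divergent integral with the real transform, which stays bounded because $F(\sigma_k)\to0$ and $\int_1^\infty|\lambda(u)|u^{-3/2}\,du<\infty$. You instead bound the complex transform directly by the real one via the triangle inequality on $[x_0,\infty)$, where $g\geq 0$. This is the same mechanism in a different order; it buys nothing essentially new, though it is a marginally tighter write-up. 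Two small points you should make explicit: (i) the absolute convergence of $\int_1^\infty\mc M(u)u^{-1-s}\,du$ for $\mathrm{Re}(s)>1/2$ requires Wintner's almost sure bound $\mc M(x)\ll x^{1/2+\epsilon}$; (ii) the paper only establishes $F(\sigma)\to0$ along \emph{some} sequence $\sigma_k\to1/2^+$, whereas you need the real transform bounded along the (a priori different) sequence on which $|F(\sigma+it)|$ blows up --- this is rescued by noting that $\sigma\mapsto\int_{x_0}^\infty g(u)u^{-1-\sigma}\,du$ is non-increasing for nonnegative $g$, so boundedness along one sequence tending to $1/2^+$ gives boundedness for all $\sigma>1/2$.

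The genuine incompleteness is that the two boundary facts are not citable background: they are where all the work in the paper lies, and a full proof must supply them. Concretely, one needs the almost sure Euler-product expansion $\log F(s)=\sum_p f(p)p^{-s}-\tfrac12\sum_p p^{-2s}+O(1)$ (Wintner); then the vanishing $F(\sigma_k)=(2\sigma_k-1)^{1/2+o(1)}$ follows from Hoeffding's inequality plus Borel--Cantelli, which give $\sum_p f(p)p^{-\sigma_k}\ll(\log\frac{1}{2\sigma_k-1})^{1/2+\epsilon}$ along a sequence, so that the deterministic term $-\tfrac12\log\zeta(2\sigma_k)\sim\tfrac12\log(2\sigma_k-1)$ dominates and drives $F$ to zero; and the blow-up $\limsup_{\sigma\to1/2^+}|F(\sigma+it)|=\infty$ for fixed $t\neq0$ requires showing that $\sum_p f(p)\cos(t\log p)p^{-\sigma}$, normalised by its standard deviation $\sim(\tfrac12\log\frac{1}{2\sigma-1})^{1/2}$, converges in distribution to a standard Gaussian (here $t\neq0$ is used to control $\log\zeta(2\sigma+2it)$), after which the event $\limsup_k S(\sigma_k)/\sqrt{V(\sigma_k)}\geq1$ has positive probability and is a tail event, hence has probability one by the Kolmogorov $0$--$1$ law. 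You correctly identified the second fact as the analytic heart; note that the first is not trivial either, since it is the cancellation between the random sum over primes and the divergent deterministic term that produces the decay.
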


We can take, for instance, in the nearly extremal case, $\lambda(x)=\pm \frac{\sqrt{x}}{(\log x)(\log\log x)^2}$ and conclude that for these choices $\mc M(x)+\lambda(x)$ changes sign infinitely often, almost surely. As a consequence, we acquire a relatively simple proof of the fact that
$$\mathcal{M}(x)=\Omega_{\pm}\left(\frac{\sqrt{x}}{(\log x)(\log\log x)^2}\right),$$
almost surely. Since $\mathcal{M}(x)$ is always an integer and $f(n)\in\{0,\pm1\}$ for all positive integers $n$, we deduce the following corollary.

\begin{corollary}\label{corolario} For any integer $z$, $\mc M(x)=z$ for an infinite number of integers $x$, almost surely.
\end{corollary}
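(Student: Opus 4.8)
The plan is to deduce the corollary from Theorem~\ref{theorem infinite sign changes} by a discrete intermediate value argument, exploiting that $\mc M(x)$ is integer-valued and moves in unit steps.

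First I would observe that a constant perturbation is admissible. For a fixed integer $z$, the choice $\lambda(x)\equiv -z$ satisfies
\(
\int_1^\infty |\lambda(u)|u^{-3/2}\,du = 2|z| < \infty,
\)
so Theorem~\ref{theorem infinite sign changes} applies and tells us that, almost surely, $\mc M(x)-z$ changes sign infinitely often. Unwinding the definition in the footnote, this means precisely that almost surely there are arbitrarily large $x$ with $\mc M(x)>z$ and, simultaneously, arbitrarily large $x$ with $\mc M(x)<z$.

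Next I would use that $\mc M$ increments by at most one unit at a time. Since $f(n)\in\{0,\pm1\}$, for every integer $n$ we have $\mc M(n)-\mc M(n-1)=f(n)\in\{-1,0,1\}$, so consecutive integer values of $\mc M$ differ by at most one. Fix any threshold $N$. By the previous paragraph there exist integers $m_1,m_2>N$ with $\mc M(m_1)>z$ and $\mc M(m_2)<z$; say $m_1<m_2$ (the reverse ordering is symmetric). Then $\mc M(m_1)\geq z+1$ and $\mc M(m_2)\leq z-1$, while the integer sequence $\mc M(m_1),\mc M(m_1+1),\dots,\mc M(m_2)$ changes by at most one at each step. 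A discrete intermediate value argument then produces an integer $m\in(m_1,m_2)$ with $\mc M(m)=z$, and $m>N$. As $N$ was arbitrary, almost surely $\mc M(x)=z$ for infinitely many integers $x$.

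There is no serious obstacle beyond Theorem~\ref{theorem infinite sign changes}; the only content is the unit-step observation. The points requiring care are (i) verifying the integrability of the constant perturbation so that the theorem genuinely applies, and (ii) phrasing the discrete intermediate value step so that it covers both orderings of $m_1,m_2$ (and, for $z=0$, reduces to the sign changes of $\mc M$ itself). Finally, since the conclusion holds on a probability-one event for each fixed $z$ and $\mb Z$ is countable, intersecting these events yields a single almost sure event on which $\mc M(x)=z$ for infinitely many integers $x$ for every integer $z$ at once, should the uniform statement be desired.
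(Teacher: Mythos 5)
Your proposal is correct and is exactly the argument the paper intends: the paper deduces the corollary in one line from Theorem \ref{theorem infinite sign changes} with $\lambda(x)\equiv -z$ together with the observation that $\mc M$ is integer-valued and moves in steps of at most one. Your write-up merely spells out the discrete intermediate value step and the countable intersection over $z$, both of which are fine.
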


In probabilistic language, we can view $\mc M(x)$ as a (multiplicative) random walk. We say that a Markov chain is recurrent if it visits at least one of its site infinitely often, almost surely. Otherwise, we say that the Markov chain is transient.  An interesting phenomenon is that the \textit{simple random walk} is recurrent in dimensions $1$ and $2$, but  transient in dimension $3$ and above. Corollary \ref{corolario} can therefore be interpreted as saying that $\mc M(x)$ is recurrent (although strictly speaking $\mc M(x)$ is not a Markov chain due to the dependence structure of $f(n)$).

\section{Proof of the main result}
Clearly Theorem \ref{teorema 1} is a consequence of Theorem \ref{theorem infinite sign changes} by choosing $\lambda(x)=0$ for all $x$. Thus, we will focus in the proof of Theorem \ref{theorem infinite sign changes}.

In what follows, $p$ denotes a generic prime number, and $\sigma$ a parameter bigger than $1/2$. By the Kolmogorov one-series Theorem, the Dirichlet series $\sum_{p}\frac{f(p)}{p^s}$ converges almost surely, provided that $Re(s)=\sigma>1/2$. This result will be implicit in all of the following Lemmas. 
\subsection{Some results for random Dirichlet series with independent summands}
\begin{lemma}\label{lema lei do logaritmo iterado para series de dirichlet}
Let $(f(p))_{p\text{ prime }}$ be Rademacher random variables. Then, for any $\epsilon>0$ there exists a sequence $\sigma_k\to1/2^+$ such that
$$\sum_{p}\frac{f(p)}{p^{\sigma_k}}\ll \left(\log \left(\frac{1}{2\sigma_k-1}\right) \right)^{1/2+\epsilon},$$
almost surely.
\end{lemma}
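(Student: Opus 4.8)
The plan is to treat $X(\sigma) := \sum_p \frac{f(p)}{p^{\sigma}}$ as a mean-zero sum of independent, bounded random variables and to control its size along a suitable sequence $\sigma_k\to 1/2^+$ by a concentration estimate combined with the Borel--Cantelli lemma. First I would record the variance. Since the summands $f(p)p^{-\sigma}$ are independent with $\EE f(p)=0$ and $\EE f(p)^2=1$, one has $V(\sigma):=\VV(X(\sigma))=\sum_p p^{-2\sigma}$, and by Mertens' theorem (equivalently, $\sum_p p^{-s}=\log\frac{1}{s-1}+O(1)$ as $s\to 1^+$) this equals $\log\frac{1}{2\sigma-1}+O(1)$ as $\sigma\to 1/2^+$. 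Thus the target quantity $(\log\frac{1}{2\sigma-1})^{1/2+\epsilon}$ is just the standard deviation of $X(\sigma)$ raised to the power $1+2\epsilon$, that is, slightly above the typical size of $X(\sigma)$.

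Next I would quantify the fluctuations of $X(\sigma)$. Because each $f(p)p^{-\sigma}$ is a bounded Rademacher variable, the elementary bound $\EE e^{t f(p)/p^{\sigma}}=\cosh(t/p^{\sigma})\leqs e^{t^2/(2p^{2\sigma})}$ multiplies over the primes to give the sub-Gaussian estimate $\EE e^{t X(\sigma)}\leqs e^{t^2 V(\sigma)/2}$; passing from finite products to the infinite series is legitimate since, as already noted, the series converges almost surely and in $L^2$ for $\sigma>1/2$ by Kolmogorov's one-series theorem. A Chernoff optimisation then yields $\PP(|X(\sigma)|\geqs\lambda)\leqs 2\exp(-\lambda^2/(2V(\sigma)))$. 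If one prefers to avoid exponential moments, Chebyshev's inequality $\PP(|X(\sigma)|\geqs\lambda)\leqs V(\sigma)/\lambda^2$ serves equally well, at the cost of a sparser sequence below.

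Finally I would choose the sequence and apply Borel--Cantelli. Setting $\sigma_k:=\tfrac{1}{2}(1+e^{-k})$, so that $2\sigma_k-1=e^{-k}$ and $V(\sigma_k)=k+O(1)$, and taking $\lambda_k:=(\log\frac{1}{2\sigma_k-1})^{1/2+\epsilon}=k^{1/2+\epsilon}$, the Chernoff bound gives $\PP(|X(\sigma_k)|\geqs\lambda_k)\leqs 2\exp(-c\,k^{2\epsilon})$, which is summable in $k$ for every $\epsilon>0$. By Borel--Cantelli, almost surely $|X(\sigma_k)|<\lambda_k$ for all sufficiently large $k$, which is exactly the asserted bound. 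With the weaker Chebyshev estimate one instead gets $\PP(|X(\sigma_k)|\geqs\lambda_k)\asymp k^{-2\epsilon}$ for this sequence, which fails to be summable when $\epsilon\leqs 1/2$; one then replaces $e^{-k}$ by $e^{-k^a}$ with $a>1/(2\epsilon)$ to restore summability, which is harmless since the sequence is permitted to depend on $\epsilon$.

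The argument is essentially soft: the one genuine idea is that the bound is required only along a sequence and not for all $\sigma$, so that a one-sided Borel--Cantelli argument replaces a full law of the iterated logarithm. I expect whatever care is needed to lie in two routine places, namely the Mertens-type asymptotic $V(\sigma)=\log\frac{1}{2\sigma-1}+O(1)$ and the justification that the concentration (or second-moment) estimate transfers to the infinite random Dirichlet series rather than merely to its partial sums; neither presents a serious obstacle.
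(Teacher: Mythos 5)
Your proposal is correct and follows essentially the same route as the paper: the same variance computation $V(\sigma)=\log\frac{1}{2\sigma-1}+O(1)$, the same sub-Gaussian (Hoeffding-type) tail bound for the infinite sum (which the paper quotes from the literature and you re-derive via $\cosh(t/p^{\sigma})\leqs e^{t^2/(2p^{2\sigma})}$), the same choice $\lambda=(\log\frac{1}{2\sigma-1})^{1/2+\epsilon}$, and the same Borel--Cantelli conclusion along a sequence making the tail probabilities summable. Your explicit choice $\sigma_k=\tfrac12(1+e^{-k})$ and the remark on the Chebyshev alternative are harmless refinements of the paper's argument.
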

\begin{proof} The series $\sum_{p}\frac{f(p)}{p^\sigma}$ has variance 
\[
\sum_{p}\frac{1}{p^{2\sigma}}=\log \zeta(2\sigma)+O(1)=\log \left(\frac{1}{2\sigma-1}\right)+O(1).
\]

Hoeffding's inequality for infinite summands (for a proof see e.g.\cite{aymoneLIL}, Lemmas 2.2 and 2.3) states that for any $\lambda>0$
\begin{equation*}
\PP\left(\sum_{p}\frac{f(p)}{p^\sigma}\geq \lambda\right)\leq\exp\left(-\frac{\lambda^2}{2\sum_{p}\frac{1}{p^{2\sigma}}}\right).
\end{equation*}   
Since the random variables $(f(p))_p$ are symmetric, we can replace in the probability above, the infinite sum by its absolute value at a cost of twice the upper bound. Therefore, by choosing $\lambda=\sqrt{2}(\log(1/(2\sigma-1)))^{1/2+\epsilon}$, we obtain that for some constant $c>0$
\begin{equation*}
\PP\left(\left|\sum_{p}\frac{f(p)}{p^\sigma}\right|\geq \sqrt{2} (\log(1/(2\sigma-1)))^{1/2+\epsilon} \right)\leq c\exp\left(-(\log(1/(2\sigma-1)))^{2\epsilon}\right).
\end{equation*}   
Thus, by the Borel-Cantelli lemma, for any sequence $\sigma_k\to1/2^+$ such that
$$\sum_{k=1}^\infty\exp\left(-(\log(1/(2\sigma_k-1)))^{2\epsilon}\right)<\infty,$$
we have that the target bound holds almost surely along this sequence.
\end{proof}
\begin{remark} It is interesting to observe that the upper bound in Lemma \ref{lema lei do logaritmo iterado para series de dirichlet}
holds for all $\sigma$ sufficiently close to $1/2^+$, not just for a sequence. A proof of this can be done by following the steps of \cite{aymoneLIL} from which we can deduce a sharp upper bound at the level of the law of the iterated logarithm. For our purposes, we only need this weaker bound for a sequence $\sigma_k$, we thank Adam Harper for pointing this out.
\end{remark}
\begin{lemma}\label{lemma CLT} Let $(f(p))_{p\text{ prime}}$ be Rademacher random variables. Then, for each fixed $t\in\mathbb{R}\setminus\{0\}$, the infinite sum
$$\sqrt{\frac{2}{\log\left(\frac{1}{2\sigma-1} \right)}}\sum_{p}\frac{f(p)\cos(t\log p)}{p^{\sigma}}$$
converges in probability distribution, as $\sigma\to1/2^+$, to a standard Gaussian random variable.
\end{lemma}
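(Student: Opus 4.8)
The plan is to recognise the sum as a series of independent, mean-zero, uniformly bounded random variables and to establish the Gaussian limit directly through characteristic functions and Lévy's continuity theorem. Write $S(\sigma)=\sum_p \frac{f(p)\cos(t\log p)}{p^\sigma}$ and let $V(\sigma)$ denote its variance. Since the $(f(p))_p$ are independent with $\EE f(p)=0$ and $\EE f(p)^2=1$, we have
\[
V(\sigma)=\sum_p\frac{\cos^2(t\log p)}{p^{2\sigma}}=\frac{1}{2}\sum_p\frac{1}{p^{2\sigma}}+\frac{1}{2}\operatorname{Re}\sum_p\frac{1}{p^{2\sigma-2it}}.
\]
The first sum is $\log\frac{1}{2\sigma-1}+O(1)$ as in Lemma \ref{lema lei do logaritmo iterado para series de dirichlet}. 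The crucial point is the second sum: using $\sum_p p^{-s}=\log\zeta(s)+O(1)$ for $\operatorname{Re}(s)$ near $1$ together with the fact that for $t\neq 0$ the value $\zeta(1-2it)$ is finite and non-zero (no pole, since $1-2it\neq 1$; no zero, since $\zeta$ does not vanish on the line $\operatorname{Re}(s)=1$), one obtains $\sum_p p^{-(2\sigma-2it)}=O(1)$ as $\sigma\to 1/2^+$. Hence $V(\sigma)=\frac{1}{2}\log\frac{1}{2\sigma-1}+O(1)$, so the normalising factor satisfies $\sqrt{\frac{2}{\log(1/(2\sigma-1))}}=\frac{1+o(1)}{\sqrt{V(\sigma)}}$, and it suffices to prove $S(\sigma)/\sqrt{V(\sigma)}\to N(0,1)$ in distribution, the extra factor $1+o(1)$ being harmless by Slutsky's lemma.

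Next I would compute the characteristic function. Because $f(p)=\pm1$ with equal probability and the $f(p)$ are independent,
\[
\EE\exp\!\left(\frac{i\xi\, S(\sigma)}{\sqrt{V(\sigma)}}\right)=\prod_p\cos\!\left(\frac{\xi\cos(t\log p)}{p^\sigma\sqrt{V(\sigma)}}\right).
\]
Taking logarithms and using $\log\cos x=-\frac{x^2}{2}+O(x^4)$ --- valid uniformly since $\sup_p\frac{|\xi|}{p^\sigma\sqrt{V(\sigma)}}\leq \frac{|\xi|}{\sqrt{2}\,\sqrt{V(\sigma)}}\to 0$ as $V(\sigma)\to\infty$ --- the quadratic terms sum to $-\frac{\xi^2}{2V(\sigma)}\sum_p\frac{\cos^2(t\log p)}{p^{2\sigma}}=-\frac{\xi^2}{2}$, while the quartic remainder is $O\!\left(V(\sigma)^{-2}\sum_p p^{-4\sigma}\right)=O(V(\sigma)^{-2})\to 0$, the prime sum converging for $\sigma>1/4$. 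Therefore the characteristic function tends to $e^{-\xi^2/2}$ for every $\xi$, and Lévy's continuity theorem yields convergence to the standard Gaussian.

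The only genuinely delicate step is the variance asymptotic, and specifically showing that the oscillatory sum $\sum_p\cos(2t\log p)\,p^{-2\sigma}$ stays bounded as $\sigma\to1/2^+$. This is precisely where the hypothesis $t\neq0$ is used: it keeps $2\sigma-2it$ away from the pole of $\zeta$ at $1$, so that $\log\zeta(2\sigma-2it)$ remains bounded. (For $t=0$ the variance would instead be $\log\frac{1}{2\sigma-1}+O(1)$, which explains the factor $2$ in the normalisation.) Everything else is a routine application of the product formula for characteristic functions combined with the elementary Taylor expansion of $\log\cos$.
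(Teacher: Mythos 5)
Your proposal is correct and follows essentially the same route as the paper: compute $V(\sigma)=\frac12\log\frac{1}{2\sigma-1}+O(1)$ using boundedness of $\log\zeta(2\sigma+2it)$ for fixed $t\neq0$, then show the characteristic function $\prod_p\cos(\xi\cos(t\log p)/(p^\sigma\sqrt{V(\sigma)}))$ tends to $e^{-\xi^2/2}$ via the Taylor expansion of $\log\cos$. Your explicit Slutsky step reconciling $\sqrt{V(\sigma)}$ with the stated normalisation is a small point the paper leaves implicit, but the argument is the same.
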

\begin{proof}
Let 
\begin{align*}
S(\sigma)&:=\sum_{p}\frac{f(p)\cos(t\log p)}{p^{\sigma}},\\
V(\sigma)&:=\EE S(\sigma)^2=\sum_{p}\frac{\cos^2(t\log p)}{p^{2\sigma}}. 
\end{align*}
Now, $\cos^2(t\log p)=\frac{1+\cos(2t\log p)}{2}$ 
and 
$$\sum_{p}\frac{\cos(2t\log p)}{2p^{2\sigma}}=\frac{1}{2}Re( \log\zeta(2\sigma+2it))+O(1),$$ which, by the classical estimates for the Riemann zeta function, is $O(1)$ provided that $t\neq 0$ is fixed. Thus, we have that the variance 
$$V(\sigma)=\frac{1}{2}\log\left(\frac{1}{2\sigma-1} \right)+O(1).$$
Now we argue as in the proof of Lemma 3.3 of \cite{aymonerealzeros} to show the convergence to the Gaussian. We do this by the method of characteristic functions.

By the independence of $(f(p))_{p\text{ prime}}$ and the dominated convergence theorem:
\begin{align*}
\varphi_\sigma(s)&:=\EE \exp \left(\frac{is S(\sigma)}{\sqrt{V(\sigma)} }\right)=\prod_{p}\cos\left(\frac{s\cos(t\log p)}{\sqrt{V(\sigma)}p^\sigma}\right).
\end{align*}
Our aim is to show that for each fixed $s\in\mathbb{R}$, $\varphi_\sigma(s)\to \exp(-s^2/2)$ as $\sigma\to 1/2^+$. Observe that $\varphi_\sigma(s)$ is an even function of $s$, so we may assume $s\geq 0$. Also, note that for each $s\geq 0$ we may choose $\sigma>1/2$ such that 
\[
\left|\frac{s\cos(t\log p)}{\sqrt{V(\sigma)}p^\sigma}\right|\leq \frac{1}{100} ,
\,\,\,\,\,\,\,\,\,\,
 0\leq 1-\cos\left(\frac{s\cos(t\log p)}{\sqrt{V(\sigma)}p^\sigma}\right)\leq \frac{1}{100}
\]
for all primes $p$ since $V(\sigma)$ gets large as $\sigma\to1/2^+$.

For $|x|\leq 1/100$, we have that $\log(1-x)=-x+O(x^2)$ and $\cos(x)=1-\frac{x^2}{2}+O(x^4)$. 
Thus, we have:
\begin{align*}
\log \varphi_\sigma(s)&=\sum_{p}\log \cos\left(\frac{s\cos(t\log p)}{\sqrt{V(\sigma)}p^\sigma}\right)\\
&=\sum_{p}\log\left(1-\left(1-\cos\left(\frac{s\cos(t\log p)}{\sqrt{V(\sigma)}p^\sigma}\right)\right)\right)\\
&=-\sum_{p}\left(1-\cos\left(\frac{s\cos(t\log p)}{\sqrt{V(\sigma)}p^\sigma}\right)\right)+O\left(\sum_{p}\left(1-\cos\left(\frac{s\cos(t\log p)}{\sqrt{V(\sigma)}p^\sigma}\right)\right)^2 \right)\\
&=-\sum_{p}\frac{s^2\cos^2(t\log p)}{2V(\sigma)p^{2\sigma}}+O\left(\sum_{p}\frac{s^4}{V^2(\sigma)p^{4\sigma}}\right)\\
&=-\frac{s^2}{2}+O\left(\frac{s^4}{V^2(\sigma)}\right).
\end{align*}
We conclude that $\varphi_\sigma(s)\to \exp(-s^2/2)$ as $\sigma\to 1/2^+$.  
\end{proof}

\begin{lemma}\label{lemma explosao serie de Dirichlet} Let $(f(p))_{p\text{ prime}}$ be Rademacher random variables. Then, for each fixed $t\in\mathbb{R}\setminus\{0\}$, and any sequence $\sigma_k\to1/2^+$, we have that 
$$\limsup_{k\to\infty}\sum_{p}\frac{f(p)\cos(t\log p)}{p^{\sigma_k}}=\infty,$$
almost surely.
\end{lemma}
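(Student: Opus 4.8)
The plan is to deduce the almost-sure blow-up of the $\limsup$ from the distributional information in Lemma \ref{lemma CLT} by pairing a reverse-Fatou estimate with a zero-one law. Throughout write $S(\sigma):=\sum_{p}\frac{f(p)\cos(t\log p)}{p^{\sigma}}$ and recall from the proof of Lemma \ref{lemma CLT} that its variance $V(\sigma)=\frac12\log(1/(2\sigma-1))+O(1)$ tends to infinity as $\sigma\to1/2^+$. Fix the given sequence $\sigma_k\to1/2^+$.

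First I would record that, for each fixed real $M>0$,
\[
\PP\big(S(\sigma_k)>M\big)=\PP\left(\frac{S(\sigma_k)}{\sqrt{V(\sigma_k)}}>\frac{M}{\sqrt{V(\sigma_k)}}\right)\xrightarrow[k\to\infty]{}\PP(Z>0)=\tfrac12,
\]
where $Z$ is a standard Gaussian; this uses Lemma \ref{lemma CLT} together with the facts that $V(\sigma_k)\to\infty$ (so the threshold $M/\sqrt{V(\sigma_k)}\to0$) and that the limiting law is continuous. Writing $A_k^{(M)}:=\{S(\sigma_k)>M\}$, the reverse Fatou inequality for events gives
\[
\PP\big(S(\sigma_k)>M\ \text{i.o.}\big)=\PP\Big(\limsup_{k}A_k^{(M)}\Big)\ \geqs\ \limsup_{k}\PP\big(A_k^{(M)}\big)=\tfrac12 .
\]
Since $\{S(\sigma_k)>M\ \text{i.o.}\}$ decreases in $M$ and $\{\limsup_k S(\sigma_k)=\infty\}=\bigcap_{M\in\NN}\{S(\sigma_k)>M\ \text{i.o.}\}$, continuity from above yields $\PP\big(\limsup_k S(\sigma_k)=\infty\big)\geqs\frac12>0$.

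It remains to upgrade this positive probability to $1$, and here I would invoke Kolmogorov's zero-one law after checking that $E:=\{\limsup_k S(\sigma_k)=\infty\}$ is a tail event. The key observation is a truncation: for each fixed $N$ split $S(\sigma)=S_{\leqs N}(\sigma)+S_{>N}(\sigma)$ according to $p\leqs N$ and $p>N$. The finite sum $S_{\leqs N}(\sigma_k)$ converges as $k\to\infty$ to $\sum_{p\leqs N}\frac{f(p)\cos(t\log p)}{\sqrt p}$, hence is bounded in $k$; consequently $\limsup_k S(\sigma_k)=\infty$ if and only if $\limsup_k S_{>N}(\sigma_k)=\infty$. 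The latter event depends only on $(f(p))_{p>N}$, so $E$ lies in $\sigma(f(p):p>N)$ for every $N$, that is, $E$ belongs to the tail $\sigma$-algebra $\bigcap_N\sigma(f(p):p>N)$. By Kolmogorov's zero-one law $\PP(E)\in\{0,1\}$, and since we have already shown $\PP(E)\geqs\frac12$, we conclude $\PP(E)=1$, which is the claim.

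The main obstacle is conceptual rather than computational: Lemma \ref{lemma CLT} supplies only convergence in distribution at each scale $\sigma$, which on its own cannot force an almost-sure statement about the whole trajectory $(S(\sigma_k))_k$. The device that resolves this is the pairing of the reverse-Fatou bound (to extract positive probability from the marginal Gaussian behaviour) with the tail-event structure of $E$ (to promote positive probability to full probability). Verifying that $E$ is genuinely a tail event---which rests on the elementary but essential point that deleting any finite set of primes leaves the $\limsup$ unchanged, since the deleted part converges---is the step requiring the most care.
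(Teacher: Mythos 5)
Your proposal is correct and follows essentially the same route as the paper's proof: a lower bound on the probability of the limsup event obtained from the central limit theorem of Lemma \ref{lemma CLT} via a reverse-Fatou/union estimate, then upgraded to probability one by Kolmogorov's zero-one law. The only differences are cosmetic --- the paper thresholds at $\sqrt{V(\sigma_k)}$ (obtaining the lower bound $\PP(Z\geq 1)$ rather than your $\tfrac12$) and merely asserts the tail-event property, which you verify carefully by truncation.
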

\begin{proof} Let $S(\sigma)$ and $V(\sigma)$ be as in the proof of Lemma \ref{lemma CLT}. Observe that the event in which
$$\limsup_{k\to\infty}\frac{S(\sigma_k)}{\sqrt{V(\sigma_k)}}\geq 1$$
is a \textit{tail event}, in the sense that it does not depend on the random variables $(f(p))_{p\leq y}$, for any fixed and large $y$.  Now,
\begin{align*}
\PP \left(\limsup_{k\to\infty}\frac{S(\sigma_k)}{\sqrt{V(\sigma_k)}}\geq 1\right)&\geq \PP\left(\frac{S(\sigma_k)}{\sqrt{V(\sigma_k)}}\geq 1,\text{ for infinitely many }k \right)\\
&=\lim_{n\to\infty}\PP\left(\bigcup_{k=n}^\infty\left[\frac{S(\sigma_k)}{\sqrt{V(\sigma_k)}}\geq 1\right]\right)\\
&\geq \frac{1}{\sqrt{2\pi}}\int_{1}^\infty e^{-s^2/2}ds,
\end{align*}
where in the last inequality above we used the convergence to the Gaussian from Lemma \ref{lemma CLT}. Thus, our tail event has positive probability and hence by the Kolmogorov $0-1$ law this probability must be $1$.
\end{proof}

\subsection{Some results for Rademacher random multiplicative functions}
The next result is essentially due to Wintner \cite{wintner}, and for a proof we refer the reader to Lemma 2.1 of \cite{aymonebiased}.
\begin{lemma}\label{lemma log F} Let $f$ be a Rademacher random multiplicative function and $F(s):=\sum_{n=1}^\infty \frac{f(n)}{n^s}$. Then $F(s)$ converges for all $s$ in the half plane $Re(s)>1/2$ and
$$\log F(s)=\sum_{p}\frac{f(p)}{p^s}-\frac{1}{2}\sum_{p}\frac{1}{p^{2s}}+\sum_{p}\sum_{m=3}^{\infty}\frac{(-1)^{m+1}f(p)^m}{mp^{ms}},$$
almost surely.
\end{lemma}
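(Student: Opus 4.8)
The plan is to read the identity off from the Euler product of $F(s)$, which is especially simple here because $f$ is supported on the squarefree integers. Since $f(p^k)=0$ for every $k\ge 2$, the local factor of $F$ at a prime $p$ is just $1+\frac{f(p)}{p^s}$, so that formally
\[
F(s)=\prod_p\left(1+\frac{f(p)}{p^s}\right).
\]
Taking logarithms and expanding each factor through $\log(1+x)=\sum_{m\ge1}\frac{(-1)^{m+1}x^m}{m}$ with $x=f(p)/p^s$, then isolating the terms $m=1$, $m=2$ (where $f(p)^2=1$ yields precisely $-\tfrac12\sum_p p^{-2s}$) and $m\ge 3$, reproduces the stated formula. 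The entire content of the lemma thus lies in justifying the relevant convergences and in legitimising the passage between the product and the Dirichlet series.

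First I would check convergence of the three pieces on $\mathrm{Re}(s)=\sigma>1/2$. The sum $\sum_p f(p)/p^s$ converges almost surely by the Kolmogorov one-series theorem recalled above, since its variance $\sum_p p^{-2\sigma}$ is finite for $\sigma>1/2$; the sum $\tfrac12\sum_p p^{-2s}$ converges absolutely in the same half-plane. For the tail I would use the uniform bound $\bigl|\log(1+f(p)/p^s)-f(p)/p^s\bigr|\ll p^{-2\sigma}$, valid regardless of the sign of $f(p)$, so that $\sum_p\sum_{m\ge3}\frac{(-1)^{m+1}f(p)^m}{mp^{ms}}$ converges absolutely for $\sigma>1/2$. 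Combining these, $\sum_p\log(1+f(p)/p^s)$ converges almost surely; the partial products over $p\le y$ then converge to a finite limit, which is moreover nonzero since $|f(p)/p^s|=p^{-\sigma}<1$ keeps every factor away from $0$, so that taking $\log F(s)$ is meaningful.

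To see that $F(s)$ itself converges for $\sigma>1/2$ I would run the standard second-moment argument. Writing $\mathcal M(x)=\sum_{n\le x}f(n)$, orthogonality of the $f(n)$ gives $\EE|\mathcal M(x)|^2=\sum_{n\le x}\mu^2(n)\asymp x$, whence $\EE\int_1^\infty|\mathcal M(x)|x^{-\sigma-1}\,dx\ll\int_1^\infty x^{-1/2-\sigma}\,dx<\infty$; this makes the partial-summation representation $\sum_{n\le N}f(n)n^{-s}=\mathcal M(N)N^{-s}+s\int_1^N\mathcal M(x)x^{-s-1}\,dx$ converge almost surely once the boundary term $\mathcal M(N)N^{-s}$ is controlled (again via the second moment along $N=2^k$ and Borel--Cantelli).

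The genuinely delicate step, which I expect to be the main obstacle, is the identification of this limit with the convergent Euler product: because $\sum_n f(n)/n^s$ is only conditionally convergent for $1/2<\sigma\le 1$, one cannot rearrange it into the product by fiat, and must instead compare the partial sum $\sum_{n\le N}f(n)/n^s$ with the smooth partial product $\prod_{p\le y}(1+f(p)/p^s)$, which equals the sum of $f(n)/n^s$ over $y$-smooth squarefree $n$, and show that the discrepancy vanishes as $N,y\to\infty$. Since this is exactly the content of Wintner's theorem, I would present the chain of reductions above and defer the remaining estimates to the cited Lemma 2.1 of \cite{aymonebiased}.
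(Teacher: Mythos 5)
Your outline is correct and ends up in the same place as the paper, which offers no proof of this lemma at all but simply attributes it to Wintner and defers to Lemma 2.1 of \cite{aymonebiased}. Your sketch of the Euler-product expansion, the convergence of the three pieces for $\mathrm{Re}(s)>1/2$, and the correct identification of the genuinely delicate step (equating the conditionally convergent Dirichlet series with the convergent Euler product) before deferring that step to the same reference is a faithful, slightly more detailed account of exactly the argument the paper is citing.
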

\begin{lemma}\label{lemma integral diverge} Let $f$ be a Rademacher random multiplicative function and $\lambda:[1,\infty)\to\mathbb{R}$ be such that $\int_1^\infty \frac{|\lambda(u)|}{u^{3/2}}du<\infty$. Then, almost surely
$$\int_{1}^\infty\frac{1}{u^{3/2}}\bigg| \lambda(u)+\sum_{n\leq u} f(n)\bigg|du=\infty.$$
\end{lemma}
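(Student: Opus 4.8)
The plan is to argue by contradiction, transferring the question from the partial sums $\mc M(u)=\sum_{n\le u}f(n)$ to the boundary behaviour of $F(s)$ via Abel summation. Fix a nonzero real number once and for all, say $t=1$. Since $\mc M(u)\ll u^{1/2+\epsilon}$ almost surely (Wintner's bound), partial summation gives
\[
\frac{F(\sigma+i)}{\sigma+i}=\int_1^\infty\frac{\mc M(u)}{u^{\sigma+1+i}}\,du=\int_1^\infty\frac{\mc M(u)}{u^{\sigma+1}}e^{-i\log u}\,du,\qquad \sigma>1/2.
\]
Suppose, toward a contradiction, that on an event of positive probability the integral in the statement is finite, equal to $C<\infty$. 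Because $u^{\sigma+1}\ge u^{3/2}$ for $u\ge1$ whenever $\sigma\ge1/2$, this finiteness propagates uniformly to all such $\sigma$, giving $\int_1^\infty|\lambda(u)+\mc M(u)|\,u^{-\sigma-1}\,du\le C$. Writing $\mc M=(\lambda+\mc M)-\lambda$ inside the representation and using $|e^{-i\log u}|=1$, I would then bound
\[
\Big|\frac{F(\sigma+i)}{\sigma+i}\Big|\le\int_1^\infty\frac{|\lambda(u)+\mc M(u)|}{u^{\sigma+1}}\,du+\int_1^\infty\frac{|\lambda(u)|}{u^{\sigma+1}}\,du\le C+\int_1^\infty\frac{|\lambda(u)|}{u^{3/2}}\,du,
\]
the last integral being finite by hypothesis on $\lambda$. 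Hence $|F(\sigma+i)|\le|\sigma+i|\,(C+\int_1^\infty|\lambda(u)|u^{-3/2}\,du)$ would remain bounded as $\sigma\to1/2^+$.

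The crux is to contradict this uniform bound by establishing $\limsup_{\sigma\to1/2^+}|F(\sigma+i)|=\infty$ almost surely. Taking real parts in the expansion of $\log F(s)$ from Lemma \ref{lemma log F} at $s=\sigma+i$ yields
\[
\log|F(\sigma+i)|=\sum_p\frac{f(p)\cos(\log p)}{p^\sigma}+O(1),
\]
where the error absorbs $-\tfrac12\sum_p\cos(2\log p)\,p^{-2\sigma}=-\tfrac12\operatorname{Re}\log\zeta(2\sigma+2i)+O(1)$, bounded as $\sigma\to1/2^+$ since $2\sigma+2i\to1+2i$ avoids the pole of $\zeta$ at $s=1$, together with the absolutely and uniformly convergent tail $\sum_p\sum_{m\ge3}(-1)^{m+1}f(p)^m(mp^{ms})^{-1}$. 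Applying Lemma \ref{lemma explosao serie de Dirichlet} with $t=1$ along the explicit sequence $\sigma_k=1/2+1/k$ gives $\limsup_k\sum_p f(p)\cos(\log p)\,p^{-\sigma_k}=\infty$ almost surely, hence $\limsup_k\log|F(\sigma_k+i)|=\infty$ and therefore $\limsup_{\sigma\to1/2^+}|F(\sigma+i)|=\infty$ almost surely. This is incompatible with the boundedness derived above, so the event of finiteness must be null and the integral diverges almost surely.

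The step I expect to be the main obstacle is the estimate $\log|F(\sigma+i)|=\sum_p f(p)\cos(\log p)\,p^{-\sigma}+O(1)$: one must check that \emph{every} term of Lemma \ref{lemma log F} beyond the leading sum contributes only $O(1)$ uniformly as $\sigma\to1/2^+$, and in particular that $\operatorname{Re}\log\zeta(2\sigma+2i)$ does not blow up. This is precisely where the restriction $t\neq0$ enters, keeping $2\sigma+2it$ away from the pole at $s=1$; had we instead worked with $F(\sigma)$ on the real axis, the secondary term would be $-\tfrac12\log(1/(2\sigma-1))+O(1)\to-\infty$, dominating the leading sum (of size $O((\log(1/(2\sigma-1)))^{1/2+\epsilon})$ by Lemma \ref{lema lei do logaritmo iterado para series de dirichlet}) and reflecting the fact that $F(\sigma)\to0$. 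By contrast, the passage through Abel summation and the absorption of $\lambda$ are routine once the uniformity in $\sigma\ge1/2$ is observed.
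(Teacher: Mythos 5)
Your proposal is correct and follows essentially the same route as the paper: pass from the integral to $F(\sigma+it)$ by partial summation, then contradict boundedness near $\sigma=1/2$ using Lemma \ref{lemma log F} together with the almost sure blow-up of $\sum_p f(p)\cos(t\log p)p^{-\sigma}$ from Lemma \ref{lemma explosao serie de Dirichlet}. The only (harmless) difference is that you obtain a uniform bound on $|F(\sigma+it)|$ directly from the monotonicity of $u^{-\sigma-1}$ in $\sigma$, whereas the paper invokes dominated convergence to get existence of the limit; both suffice for the contradiction.
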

\begin{proof} Assume, by contradiction, that with positive probability the integral above converges. Then, with positive probability, for each $t\in\mathbb{R}$, the integral 
$$\int_{1}^\infty\frac{1}{u^{3/2+it}} \left(\lambda(u)+\sum_{n\leq u} f(n)\right)du$$
 converges absolutely. Thus, by the dominated convergence theorem
$$\lim_{\sigma\to 1/2^+}\int_{1}^\infty\frac{1}{u^{1+\sigma+it}} \left(\lambda(u)+\sum_{n\leq u} f(n)\right)du = \int_{1}^\infty\frac{1}{u^{3/2+it}} \left(\lambda(u)+\sum_{n\leq u} f(n)\right)du.$$
By the dominated convergence theorem again, the limit
$$\lim_{\sigma\to1/2^+}\int_1^\infty\frac{\lambda(u)}{u^{1+\sigma+it}}du$$
exists and is finite. Thus, the following limit exists and is finite:
$$\lim_{\sigma\to 1/2^+}\int_{1}^\infty\frac{1}{u^{1+\sigma+it}} \left(\sum_{n\leq u} f(n)\right)du. $$
Now we recall that by partial summation the Dirichlet series $F(\sigma+it):=\sum_{n=1}^{\infty}\frac{f(n)}{n^{\sigma+it}}$ satisfies
$$F(\sigma+it)=(\sigma+it)\int_{1}^\infty \frac{1}{u^{1+\sigma+it}}\sum_{n\leq u}f(n) du,$$ 
where this integral converges absolutely for each $\sigma>1/2$ almost surely. Thus we have that for each real $t\neq 0$, the limit $\lim_{\sigma\to 1/2^+}F(\sigma+it)$ exists and is finite with positive probability. Now, by Lemma \ref{lemma log F}, as $\sigma\to 1/2^+$, almost surely we have that 
\begin{align*}
\log F(\sigma+it)&=\sum_{p}\frac{f(p)}{p^{\sigma+it}}-\frac{1}{2}\sum_{p}\frac{1}{p^{2(\sigma+it)}}+\sum_{p}\sum_{m=3}^{\infty}\frac{(-1)^{m+1}f(p)^m}{mp^{m(\sigma+it)}}\\
&=\sum_{p}\frac{f(p)}{p^{\sigma+it}}-\frac{\log \zeta(2\sigma+2it)}{2}+O(1)\\
&=\sum_{p}\frac{f(p)}{p^{\sigma+it}}+O(1),
\end{align*}
provided that $t$ is real, fixed and $\neq 0$. Since $Re \sum_{p}\frac{f(p)}{p^{\sigma+it}}=\sum_{p}\frac{f(p)\cos(t\log p)}{p^{\sigma}}$ and $|F(\sigma+it)|=\exp ( Re \log F(\sigma+it))$, by Lemma \ref{lemma explosao serie de Dirichlet}, almost surely $|F(\sigma+it)|$ tends to $\infty$ along a subsequence $\sigma=\sigma_k\to1/2^+$. This gives the desired contradiction.
\end{proof}
\remark At this point it is interesting to compare our method with that of Hal\'asz \cite{halasz}. Concerning omega bounds for $|\sum_{n\leq x}f(n) |$, Hal\'asz exploited the fact that we can obtain a much larger bound if we consider $$\sup_{1\leq t\leq 2}\sum_p\frac{f(p)\cos(t\log p)}{p^\sigma}.$$ Indeed, he obtained a lower bound for this supremum that is slightly smaller than twice the variance of this random sum evaluated at any fixed $t\neq 0$, with high probability. Later, this idea was exploited by Harper in \cite{harpergaussian} where he improved Hal\'asz's omega bound for the partial sums. In the case of sign changes, we stress that we only need to show that this random sum blows up along a sequence $\sigma_k\to 1/2^+$ for a fixed value of $t\neq 0$, almost surely.

\begin{lemma}\label{lemma F zera perto de meio} Let $f$ be a Rademacher random multiplicative function and $F(\sigma)$ its Dirichlet series. Then, there exists a sequence $\sigma_k\to 1/2^+$ such that $F(\sigma_k)=(2\sigma_k-1)^{1/2+o(1)}$, almost surely.
\end{lemma}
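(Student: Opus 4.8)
The plan is to read off the asymptotics of $F(\sigma)$ for real $\sigma>1/2$ directly from the logarithmic expansion in Lemma \ref{lemma log F}, and then to annihilate the fluctuating linear prime sum by choosing a favorable sequence via Lemma \ref{lema lei do logaritmo iterado para series de dirichlet}. Before doing so I would record that $F(\sigma)>0$ for every real $\sigma>1/2$: since $p^{-\sigma}<p^{-1/2}\leqs 2^{-1/2}<1$ for all primes $p$, each Euler factor $1+f(p)p^{-\sigma}$ is strictly positive, so $F(\sigma)$ is positive and $\log F(\sigma)$ is a genuine real logarithm. This legitimizes both the identity of Lemma \ref{lemma log F} along the real axis and the statement $F(\sigma_k)=(2\sigma_k-1)^{1/2+o(1)}$, which only makes sense for a positive quantity.

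Next, setting $s=\sigma$ in Lemma \ref{lemma log F}, I would evaluate the three terms as $\sigma\to1/2^+$. The second term is deterministic:
$$\frac{1}{2}\sum_{p}\frac{1}{p^{2\sigma}}=\frac{1}{2}\log\zeta(2\sigma)+O(1)=\frac{1}{2}\log\left(\frac{1}{2\sigma-1}\right)+O(1),$$
using $\zeta(2\sigma)\sim(2\sigma-1)^{-1}$. The triple sum over $m\geqs 3$ is bounded as $\sigma\to1/2^+$, being dominated by $\sum_{p}\sum_{m\geqs 3}p^{-m/2}=O(1)$, so it contributes $O(1)$ almost surely. Hence, almost surely,
$$\log F(\sigma)=\sum_{p}\frac{f(p)}{p^{\sigma}}-\frac{1}{2}\log\left(\frac{1}{2\sigma-1}\right)+O(1).$$

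Finally, I would invoke Lemma \ref{lema lei do logaritmo iterado para series de dirichlet} with any fixed $\epsilon\in(0,1/2)$ to obtain a sequence $\sigma_k\to1/2^+$ along which $\sum_{p}f(p)p^{-\sigma_k}\ll(\log(1/(2\sigma_k-1)))^{1/2+\epsilon}$ almost surely. Since $1/2+\epsilon<1$, this bound is $o(\log(1/(2\sigma_k-1)))$, so the linear prime sum is negligible against the main term $-\frac{1}{2}\log(1/(2\sigma_k-1))$. Therefore
$$\log F(\sigma_k)=-\left(\tfrac{1}{2}+o(1)\right)\log\left(\frac{1}{2\sigma_k-1}\right)=\left(\tfrac{1}{2}+o(1)\right)\log(2\sigma_k-1),$$
and exponentiating gives $F(\sigma_k)=(2\sigma_k-1)^{1/2+o(1)}$, as claimed.

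As for obstacles, there is essentially no hard analytic step here: the lemma is an assembly of results already established. The only points demanding care are the positivity of $F(\sigma)$, so that every expression is a bona fide real logarithm, and the verification that the exponent $1/2+\epsilon$ in Lemma \ref{lema lei do logaritmo iterado para series de dirichlet} is strictly below $1$ — this is precisely what forces the random prime sum into the $o(1)$ error and thus necessitates the choice $\epsilon<1/2$.
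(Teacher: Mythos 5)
Your proposal is correct and follows essentially the same route as the paper: apply Lemma \ref{lemma log F} on the real axis, identify $\frac{1}{2}\sum_p p^{-2\sigma}$ with $\frac{1}{2}\log\zeta(2\sigma)=\frac{1}{2}\log(1/(2\sigma-1))+O(1)$, and use Lemma \ref{lema lei do logaritmo iterado para series de dirichlet} with $\epsilon<1/2$ to make the random prime sum $o(\log(1/(2\sigma_k-1)))$ along a sequence. Your additional remark on the positivity of $F(\sigma)$ via the Euler product is a sensible (if implicit in the paper) sanity check, but the argument is otherwise identical.
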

\begin{proof} Indeed, by Lemmas \ref{lema lei do logaritmo iterado para series de dirichlet} and \ref{lemma log F}, for any $0<\epsilon<1/2$, there exists a sequence $\sigma_k\to1/2^+$ such that almost surely:
\begin{align*}
\log F(\sigma_k)&=\sum_{p}\frac{f(p)}{p^{\sigma_k}}-\frac{1}{2}\sum_{p}\frac{1}{p^{2\sigma_k}}+O(1)\\
&=\sum_{p}\frac{f(p)}{p^{\sigma_k}}-\frac{\log \zeta(2\sigma_k)}{2}+O(1)\\
&=O\left( \left(\log \left(\frac{1}{2\sigma_k-1}\right) \right)^{1/2+\epsilon} \right)-\frac{1}{2}\log\left(\frac{1}{2\sigma_k-1}\right)\\
&=(1/2+o(1))\log (2\sigma_k-1).
\end{align*}
\end{proof}
\subsection{Proof of Theorem \ref{theorem infinite sign changes}}
\begin{proof} Assume by contradiction that with positive probability, $\lambda(x)+\sum_{n\leq x}f(n)$ does not changes sign infinitely often as $x\to\infty$, and moreover, say that for some (random) $x_0$, $\lambda(x)+\sum_{n\leq x}f(n)\geq 0$, for all $x\geq x_0$. Thus, for sufficiently large $x$ we have 
$$\int_1^x\frac{1}{u^{3/2}}\left(\lambda(u)+\sum_{n\leq u}f(n) \right)du=O(1)+\int_1^x\frac{1}{u^{3/2}}\left|\lambda(u)+\sum_{n\leq u}f(n) \right|du.$$ 
Now we recall that $F(\sigma):=\sum_{n=1}^\infty \frac{f(n)}{n^\sigma}$ is equal to $\sigma\int_1^{\infty}\frac{1}{u^{1+\sigma}}\sum_{n\leq u}f(n)du$. 
Thus, by Lemma \ref{lemma F zera perto de meio}, along a sequence $\sigma_k\to1/2^+$, we have that with positive probability:
\begin{align*}
(2\sigma_k-1)^{1/2+o(1)}+\sigma_k\int_{1}^\infty\frac{\lambda(u)}{u^{1+\sigma_k}}du&=F(\sigma_k)+\sigma_k\int_{1}^\infty\frac{\lambda(u)}{u^{1+\sigma_k}}du \\
&\geq \sigma_k\int_{1}^x\frac{1}{u^{1+\sigma_k}}\left(\lambda(u)+\sum_{n\leq u}f(n)\right)du.
\end{align*}
Making the limit $\sigma_k\to1/2^+$ in the inequality above, we obtain that
$$O(1)\geq \int_{1}^x\frac{1}{u^{3/2}}\left|\lambda(u)+\sum_{n\leq u}f(n)\right|du,$$
which is by Lemma \ref{lemma integral diverge}, a contradiction for sufficiently large $x$.

Now assume that with positive probability $\lambda(x)+\sum_{n\leq x}f(n)\leq 0$, for all $x\geq x_0$. Arguing as above, we have that
$$\int_1^x\frac{1}{u^{3/2}}\left(\lambda(u)+\sum_{n\leq u}f(n) \right)du=O(1)-\int_1^x\frac{1}{u^{3/2}}\left|\lambda(u)+\sum_{n\leq u}f(n) \right|du.$$
Thus, with positive probability:
\begin{align*}
(2\sigma_k-1)^{1/2+o(1)}+\sigma_k\int_{1}^\infty\frac{\lambda(u)}{u^{1+\sigma}}du&=F(\sigma_k)+\sigma_k\int_{1}^\infty\frac{\lambda(u)}{u^{1+\sigma_k}}du \\
&\leq \sigma_k\int_{1}^x\frac{1}{u^{1+\sigma_k}}\left(\lambda(u)+\sum_{n\leq u}f(n)\right)du.
\end{align*}
Making the limit $\sigma_k\to1/2^+$ in the inequality above, we obtain that
$$O(1)\leq-\int_{1}^x\frac{1}{u^{3/2}}\left|\lambda(u)+\sum_{n\leq u}f(n)\right|du,$$
which is again by Lemma \ref{lemma integral diverge}, a contradiction for sufficiently large $x$. \end{proof}

\section{The multiplicative random walk on $\mathbb{Z}^2$}
As said before, Corollary \ref{corolario} can be interpreted as saying that the multiplicative random walk is recurrent. A natural question could be what happens in dimension $2$. 

By defining $(f(p))_{p\text{ prime}}$ to be \textit{i.i.d.} with $f(p)$ uniformly distributed over the set $\{\pm1, \pm i\}$, and extending $f$ to the positive integers multiplicatively with support on the squarefree integers, then for any positive integer $n$, $f(n)\in\{0,\pm1, \pm i\}$. Therefore, $\mathcal{M}(x)=\sum_{n\leq x}f(n)$ is always a number of the form $a+bi$, where $a$ and $b$ are integers, and hence can be seen as the multiplicative random walk in $2$ dimensions.

Notice that in two dimensions we lose the notion of sign changes, but we can still ask about recurrence. We conclude this section with the following question.

\noindent \textit{Question.} In two dimensions, does the equation $\mathcal{M}(x)=0$ have an infinite number of integer solutions? 

\section{Some simulations}
Below we plot a sample from the multiplicative random walk in dimensions $1$ and $2$.

\begin{figure}[h]
The multiplicative random walk in dimension $1$\\
\includegraphics[scale=0.6]{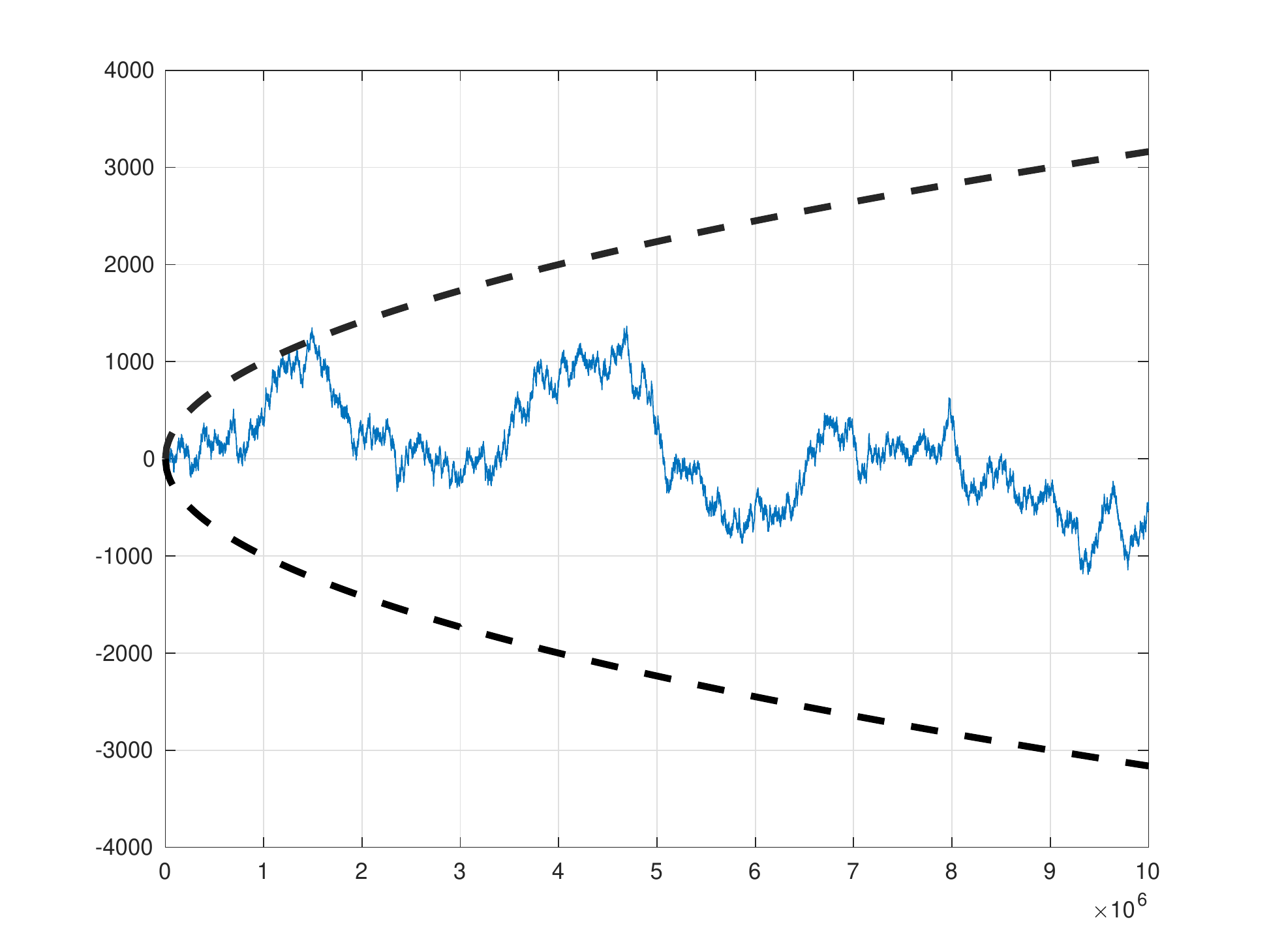} 
\caption{The dashed curve is given by $x\mapsto\pm x^{1/2}$, and the continuous curve is given by the multiplicative random walk in dimension $1$: $x\mapsto\sum_{n\leq x}f(n)$, where $1\leq x\leq 10^7$.}
\end{figure}

\begin{figure}[h]
The multiplicative random walk in dimension $2$\\
\includegraphics[scale=0.6]{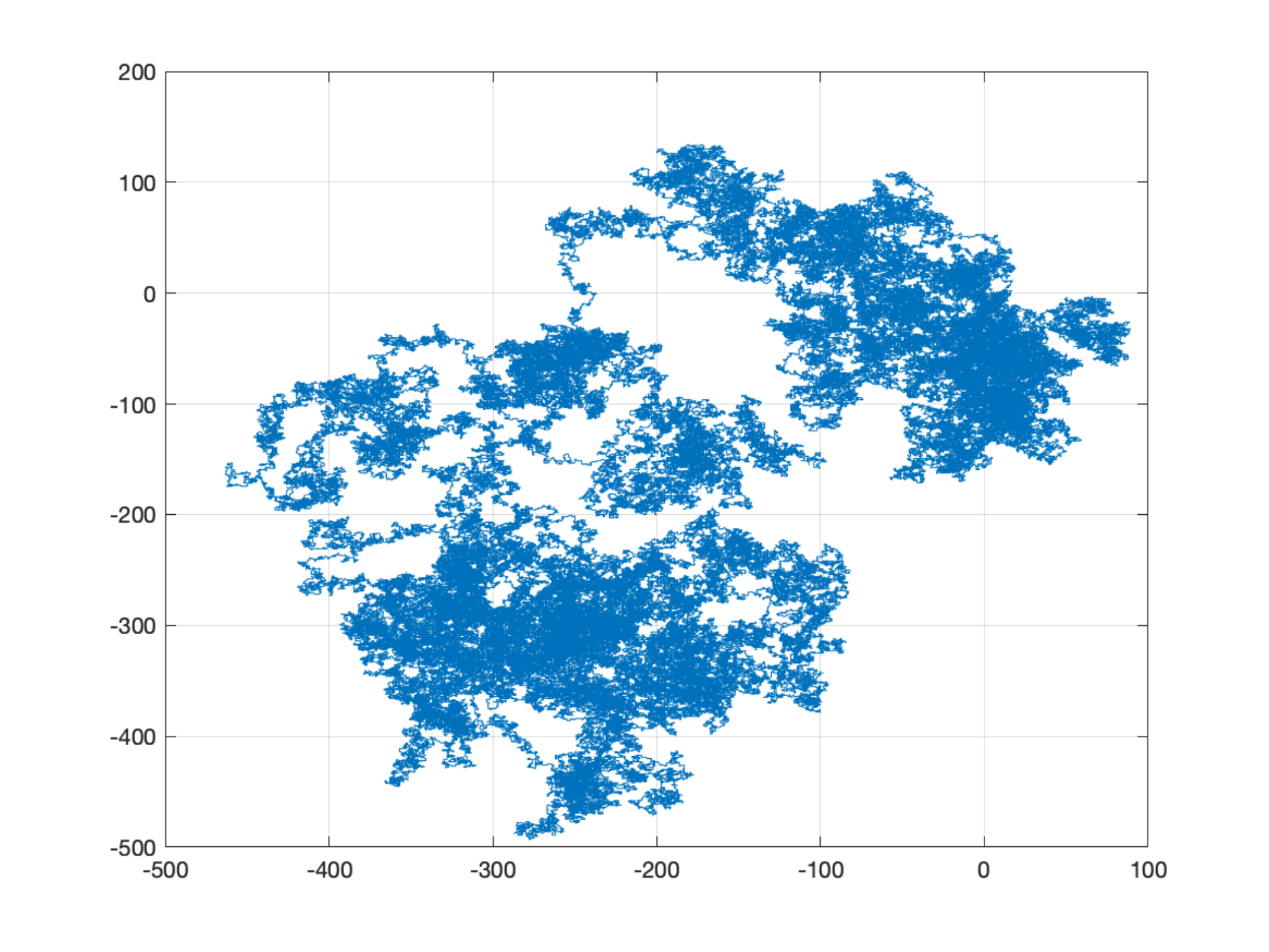} 
\caption{Let $M(x):=\sum_{n\leq x}f(n)$. The blue part is given by set of points visited by the walk, more precisely $\{M(x)\in\CC: 1\leq x\leq 10^6\}$.}
\end{figure}

\newpage
\noindent \textbf{Acknowledgements}. We would like to thank the referee for a careful reading of the paper and for important and useful suggestions and corrections. MA is supported by CNPq, grant Universal no. 403037/2021-2.

\end{document}